\documentclass[11pt,reqno]{amsart}
\usepackage[margin=20mm]{geometry}
\usepackage{amssymb}
\usepackage{bbm}
\usepackage{dsfont}
\usepackage{amsfonts}
\usepackage{pifont}
\usepackage{bbding}
\usepackage{latexsym}
\usepackage{mathrsfs}
\usepackage{amsmath}
\allowdisplaybreaks[4]
\usepackage{amsthm}
\usepackage{amssymb}
\usepackage{graphicx}
\usepackage{hyperref}
\usepackage[capitalise]{cleveref}
\usepackage{fancyhdr}
\usepackage{color}

\newtheorem{theorem}{\indent Theorem}[section]

\newtheorem{assumption}{\indent Assumption}[section]
\newtheorem{lemma}{\indent Lemma}[section]
\newtheorem{remark}{\indent Remark}[section]

\numberwithin{equation}{section}

\newcommand{\R}{\mathbb{R}}
\newcommand{\s}{\mathbb{S}}

\newcommand{\al}{\alpha}

\newcommand{\de}{\delta}

\newcommand{\proofof}[1]
\allowdisplaybreaks

\begin{document}
	
\allowdisplaybreaks

\title[Stable limit theorem]{A stable limit theorem for SDEs driven by multiplicative $\alpha$-stable processes}


\author{Kun Yin}

\address{\textbf{Kun Yin:} School of Mathematical Sciences, Shanghai Jiao Tong University, 200240 Shanghai, PR China.}\email{epsilonyk@sjtu.edu.cn}


\keywords{Stable-like operators; multiplicative $\alpha$-stable noise; stable limit theorem}

\subjclass[2020]{60F17; 60G52; 60H10; 60J75}

\allowdisplaybreaks

\begin{abstract}
We derive a stable limit theorem for stochastic differential equations driven by multiplicative $\alpha$-stable processes. A key ingredient is the $L^1$-exponential contractivity estimate for the SDEs. The limiting process is a non-degenerate symmetric $\alpha$-stable process with an averaged Lévy measure.

\end{abstract}

\maketitle

\section{Introduction}

\subsection{Background}
It is known that the following model represents a classical framework in statistical fluid mechanics,
$$
dY_t=f(Y_t)dt+\text{L\'evy noise},
$$
which describes the dynamics of (anomalous) diffusive particles convected by a random velocity field $f(y)$. The long-time behavior and limit theorems of SDEs driven by L\'evy processes have been extensively studied.

For additive $\alpha$-stable noise, exponential ergodicity follows from dissipative drifts \cite{MBM,SXX,JW}. In particular, \cite{JW2} derived $L^p$-Wasserstein contraction by a coupling method under a partially dissipative condition.

Functional limit theorems for additive functionals of Markov processes have been studied extensively; see, e.g., \cite{BWY,XC,KV,AS}. The ``martingale approximation'' and ``coupling'' approaches developed in \cite{JKO,PSV} have been widely used, for example, in \cite{GP,MI}.
It is also worth mentioning that the Stein method was employed in \cite{ADB} to obtain distributional approximations in functional approximations by the Wiener process and other Gaussian processes, and
was subsequently extended to stable processes in \cite{X2,X1}.

In particular, a positive recurrent one-dimensional diffusion process described by an It\^o equation was studied in \cite{LB}. Under assumptions
involving slowly varying functions and different scalings, the author established the convergence of rescaled additive functionals to Brownian motion and $\alpha$-stable processes. For functionals of stable-like
processes, \cite{BF} investigated processes with periodic coefficients
using Doeblin's classical results on invariant measures.

It is worth noting that additive L\'evy noise together with a dissipative drift makes it easier to obtain pathwise exponential contractivity, which in turn yields exponential ergodicity. In the multiplicative case,
however, the noise no longer cancels in the difference of two solutions, making the analysis more delicate.

\subsection{Sketch of this paper}

We consider the following stochastic differential equation driven by a multiplicative $\alpha$-stable process, where  $L_{t}$ is an isotropic  $\alpha$-stable process with $1<\alpha<2$,
\begin{align}\label{multca}
	dY_{t}=f(Y_{t})dt+\delta(Y_{t-})dL_{t},\quad
	Y_{0}=y\in \mathbb{R}^{d },
\end{align}	
we study the stable limit theorem in Theorem \ref{addi-func},  	$$
	(A_t^\varepsilon)_{t\geq0}:=\bigg(\varepsilon^{1/\alpha}
\int_0^{t/\varepsilon}f(Y_s)\,ds\bigg)_{t\geq0} \xrightarrow{\varepsilon\to0}  (A_t)_{t\geq0}, \quad   \text{ in finite-dimensional distributions,}
$$
where $A_t$ is a non-degenerate symmetric $\alpha$-stable process, whose L\'evy measure is given in \eqref{lim-lev}. The key ingredient is the $L^1$-exponential contractivity of $Y_t$ established in Lemma \ref{str-contra}.

\begin{remark}
	It is worth emphasizing that the $L^1$-exponential contractivity	obtained in this paper is more involved than in the case of additive Lévy noise. In general, non-synchronous couplings, such as reflection coupling, may yield contraction in expectation or in Wasserstein distance, but do not	provide the contraction of the trajectories \cite{JW2}.
	Under the synchronous coupling, the two solutions are driven by the
	same Lévy process. For the multiplicative noise in \eqref{multca},
	however, we have
	\[
	d\big(Y_t^{y_1}-Y_t^{y_2}\big)=\big(f(Y_t^{y_1})-f(Y_t^{y_2})\big)dt+
	\big(\delta(Y_{t-}^{y_1})-\delta(Y_{t-}^{y_2})\big)dL_t,
	\]
unlike the additive-noise case, the difference process still contains a nontrivial noise term. Instead of relying on cancellation, we control this term using the dissipativity of $f$ and the Lipschitz continuity of $\delta$, which yields the $L^1$-exponential contractivity in \eqref{strong-est}.
\end{remark}

\subsection{Organization of this paper}
This paper is organized as follows. In Section \ref{setting}, we introduce the notation, assumptions, and state the main results concerning stable limit theorem of the SDE. In Section \ref{con} we prove the $L^1$-exponential contractivity of $Y_t$. In Section \ref{add} we prove the stable limit theorem. Appendix \ref{tech} is devoted to the technical estimates of nonlocal operator, and moment estimates of rescaled processes.

\section{Some settings and main results}\label{setting}

\subsection{Notations and assumptions}
Let $\mathbb R^d$ be Euclidean space, $\langle\cdot,\cdot\rangle$ and $|\cdot|$ inner product and norm, $\|\cdot\|$ the matrix  operator norm.  Let $(\Omega,\mathcal F,(\mathcal F_t)_{t\ge0},\mathbb P)$ be a probability space with expectation $\mathbb E$.  $\xrightarrow{\mathcal D}$, $\overset{\mathcal D}{=}$ denote convergence in distribution, equality in law respectively. $\mathcal{B}(\mathbb{R}^d\setminus\{0\})$ denotes the Borel $\sigma$-algebra. $P_t$ denotes the transition semigroup of $Y_t$.  $Y^{y}_{t}$  denotes the process $Y_{t}$ starting from $y$. 

$ C_{b}(\mathbb{R}^{d} )$=\{$u:\mathbb{R}^{d}\to \mathbb{R}$: $u$ is bounded and continuous\}. For $k\in \mathbb{N_+}$, we denote:

$ C^{k}(\mathbb{R}^{d} )$=\{$u:\mathbb{R}^{d}\to \mathbb{R}$: $u$ and all its partial derivatives up to order $k$ are continuous\}, 

$ C^{k}_{b}(\mathbb{R}^{d} )$=\{$u\in  C^{k}(\mathbb{R}^{d})$: all the partial derivatives of  orders $1,\cdots,k$ are bounded\}.

Define  
the infinitesimal generator  of $Y_t$  as
\begin{align}\label{l2}
	 \mathcal{L}u(y)
	 &=\int_{\mathbb R^{d}\setminus\{0\}}\big(u(y+\delta(y)z)-u(y)-\langle\delta(y)z,\nabla_{y}u(y)\rangle I_{\{|z|\leq1\}}\big)\nu(dz)\nonumber\\
	 &\quad+\left\langle f(y),\nabla_{y}u(y)\right\rangle ,
	 \end{align}
where $\nu(dz)=\frac{c_{\alpha,d}}{|z|^{d+\alpha}}dz$ is a symmetric L\'{e}vy measure, $c_{\alpha,d}>0$ is constant. The compensated Poisson measure of $\nu$ is defined as 
$ \tilde N(t,B)=N(t,B)-t\nu(B),$ $B\in\mathcal{B}(\mathbb{R}^d\setminus\{0\})$.

We next impose some important assumptions.

\begin{assumption}[Dissipative condition]\label{dis}
 There exists $\lambda>0$ such that for all
 $y_1,y_2\in\mathbb{R}^{d}$,
\begin{equation}\label{2.2}
|f(0)|<\infty,  \quad	\langle f(y_1)-f(y_2),y_1-y_2\rangle
\leq -\lambda |y_1-y_2|^2 .
\end{equation}		
\end{assumption}

\begin{assumption}[Lipschitz condition]\label{lip}
There exist $L_b,L_\delta>0$ such that for all
$y_1,y_2\in\mathbb{R}^{d}$,
	\begin{align*}
		|f(y_{1})-f(y_{2})| \leq L_b|y_{1}-y_{2}|,
		\quad	\|\delta(y_1)-\delta(y_2)\|
		\leq L_\delta |y_1-y_2|.
	\end{align*}
\end{assumption}

\begin{assumption}[Growth and boundedness conditions]\label{gro-bou}
There exist $C_f,C_\delta,c_\delta>0$ s.t.  for all
$y\in\mathbb{R}^{d}$,
\begin{align*}
 |f(y)| \leq C_{f}(1+|y|),\quad   
 c_\delta\le|\de(y) \hat{z}|\le C_\delta \ \text{for any}\ \hat{z}=z/|z|\in\s^{d-1}.
\end{align*}
\end{assumption}

\subsection{Main results}
  Next we state the main results of this paper.

\begin{theorem}\label{addi-func}
	
	Let  Assumptions $\ref{dis}$--$\ref{gro-bou}$ hold, with  $\lambda$ sufficiently large,
	then \eqref{multca} has a unique invariant measure  $\mu$. If $\bar{f}=\int_{\mathbb{R}^{ d}}f(y)\mu(dy)=0,$   $\de(y)\in C_{b}^{ 1}(\mathbb{R}^{ d})$,  then 
	$$
	(A_t^\varepsilon)_{t\geq0}:=\bigg(\varepsilon^{1/\alpha}
	\int_0^{t/\varepsilon}f(Y_s)\,ds\bigg)_{t\geq0} \xrightarrow{\varepsilon\to0}  (A_t)_{t\geq0}, \quad   \text{ in finite-dimensional distributions,}
	$$
	and $A_t$ is a non-degenerate symmetric $\alpha$-stable process with  L\'evy measure $\bar{\nu}$, which is defined as
	\begin{equation}\label{lim-lev}
		\bar{\nu}(B)=\int_{\mathbb R^d}
		\bigl(\nu\circ F_y^{-1}\bigr)(B)\,\mu(dy),
		\qquad \text{for any } B\in\mathcal{B}(\mathbb{R}^d\setminus\{0\}),
	\end{equation}
	where $\nu\circ F_y^{-1}$ is the image measure of $\nu$ with respect to $ F_y(z)=\delta(y)z$.
\end{theorem}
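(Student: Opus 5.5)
The plan has three parts: existence and uniqueness of $\mu$, a corrector (Poisson equation) construction, and a decomposition of $A^\varepsilon_t$ into a vanishing remainder plus a rescaled small-jump term whose limit is identified by an averaging argument.

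\textbf{Step 1: invariant measure.} Using the synchronous coupling, I would first prove the $L^1$-exponential contractivity
\[
\mathbb E|Y_t^{y_1}-Y_t^{y_2}|\le Ce^{-\kappa t}|y_1-y_2|,\qquad \kappa>0 .
\]
Apply It\^o's formula to a smoothed version of $|Y^{y_1}_t-Y^{y_2}_t|$, for instance $\sqrt{\epsilon+|\cdot|^2}$. The drift contributes at most $-\lambda|Y^{y_1}-Y^{y_2}|$. For the jump part, split at $|z|\le R$ and $|z|>R$ and use the Lipschitz bound on $\delta$. The large-jump part gives a factor $L_\delta\int_{|z|>R}|z|\nu(dz)$, which is finite since $\alpha>1$. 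For small jumps, the second-order Taylor term is bounded by $L_\delta^2|Y^{y_1}-Y^{y_2}|^2|z|^2/|Y^{y_1}-Y^{y_2}|$, which is again linear in the difference. Altogether the generator of the difference is bounded by $(-\lambda+C(L_\delta,\alpha))|Y^{y_1}-Y^{y_2}|$, so $\lambda$ large yields contraction.

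Moments of order $p\in[1,\alpha)$ of $Y_t$ are uniformly bounded by a Lyapunov argument with $V(y)=(1+|y|^2)^{p/2}$, using the dissipativity of $f$ and the boundedness of $\delta$. Tightness of the time averages then gives existence of $\mu$ (Krylov--Bogoliubov), and contractivity in $W_1$ gives uniqueness together with $|P_tg(y)-\mu(g)|\le Ce^{-\kappa t}\|g\|_{\rm Lip}(1+|y|)$ for Lipschitz $g$.

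\textbf{Step 2: corrector.} Since $\bar f=0$, I would set
\[
\Phi(y)=\int_0^\infty P_tf(y)\,dt .
\]
It is well defined, Lipschitz, and of linear growth by Step 1, and it solves $-\mathcal L\Phi=f$. For It\^o's formula one needs $\Phi\in C^1$ with Lipschitz gradient, or at least enough regularity for the nonlocal terms. I would obtain this from the derivative flow $\nabla_yY_t$, using $\delta\in C^1_b$, and extend to a general $f$ by mollification. Applying It\^o's formula to $\Phi(Y_t)$ gives
\[
\int_0^{t/\varepsilon}f(Y_s)\,ds=\Phi(y)-\Phi(Y_{t/\varepsilon})+\int_0^{t/\varepsilon}\!\!\int\big(\Phi(Y_{s-}+\delta(Y_{s-})z)-\Phi(Y_{s-})\big)\tilde N(ds,dz).
\]
After multiplying by $\varepsilon^{1/\alpha}$, the boundary terms vanish in $L^1$ because the $p$-moments are uniformly bounded.

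\textbf{Step 3: identifying the limit.} In the martingale term, split the jumps into those with $|z|>\varepsilon^{-1/\alpha}$, which are the ones that matter after scaling, and the rest. Write
\[
\Phi(Y+\delta z)-\Phi(Y)=\nabla\Phi(Y)\delta(Y)z+R(Y,z).
\]
Under the scaling, the Lévy measure of $\varepsilon^{1/\alpha}\nabla\Phi(Y)\delta(Y)z$ becomes $\varepsilon^{-1}\nu\circ(\nabla\Phi\,\delta)^{-1}$ by homogeneity of $\nu$. The statement instead asserts the image measure under $F_y(z)=\delta(y)z$, which corresponds to $\nabla\Phi\approx$ identity at large scales. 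Indeed, for $|z|\gg1$ one has $\Phi(y+\delta z)-\Phi(y)\approx$ something comparable to the jump itself. The right route is therefore to compare $\varepsilon^{1/\alpha}\big(\Phi(Y+\delta(Y)z/\varepsilon^{1/\alpha})-\Phi(Y)\big)$ with $\delta(Y)z$ via the asymptotic linearity of $\Phi$ at infinity, together with the fact that the conditional characteristic functions depend on $Y$ only through $\delta(Y)$. By ergodicity, $\varepsilon\int_0^{t/\varepsilon}g(Y_s)\,ds\to t\,\mu(g)$ in $L^1$. Evaluating the joint characteristic function of the increments and conditioning successively, the exponent converges to $t\int\int(e^{i\langle\xi,\delta(y)z\rangle}-1)\nu(dz)\mu(dy)$, which is exactly $\bar\nu$ in \eqref{lim-lev}. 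Non-degeneracy follows from $c_\delta\le|\delta\hat z|$, and symmetry from the symmetry of $\nu$.

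\textbf{Main obstacle.} I expect the hardest step to be controlling the remainder $R$ and justifying that, at scale $\varepsilon^{-1/\alpha}$, the jump contributes $\delta(Y)z$ rather than $\nabla\Phi\,\delta(Y)z$. My first attempt would be the direct decomposition $\int f(Y_s)\,ds=Y_t-y-\int\delta\,dL$. Then $\varepsilon^{1/\alpha}(Y_{t/\varepsilon}-y)\to0$ by the moment bounds, so $A^\varepsilon_t\approx-\varepsilon^{1/\alpha}\int_0^{t/\varepsilon}\delta(Y_{s-})\,dL_s$. This sidesteps the corrector entirely, and the limit follows from the ergodic averaging of the characteristic exponent $\int(1-\cos\langle\xi,\delta(Y_s)z\rangle)\nu(dz)$. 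The sign is irrelevant by symmetry.
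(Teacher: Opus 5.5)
The route you settle on in your last paragraph is correct and is the paper's route up to the final identification step. The decomposition $A^\varepsilon_t=\varepsilon^{1/\alpha}(Y_{t/\varepsilon}-y)-M^\varepsilon_t$, with $M^\varepsilon_t=\varepsilon^{1/\alpha}\int_0^{t/\varepsilon}\delta(Y_{s-})\,dL_s$, is \eqref{a-m}, and the first term is killed by the uniform moment bound exactly as in the paper. Your Step 1 (synchronous coupling, It\^o for $\sqrt{\epsilon+|\cdot|^2}$, Lyapunov function $(1+|y|^2)^{p/2}$, Krylov--Bogoliubov) is Lemmas \ref{str-contra} and \ref{T32}. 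One detail there: split small and large jumps at some $R<1/L_\delta$, e.g.\ $R=1/(2L_\delta)$. Your Hessian bound $\lesssim 1/|Z|$ is needed at the intermediate points $Z+\theta Gz$, and $|Z+\theta Gz|\ge(1-L_\delta R)|Z|$; the paper gets the same effect by splitting at $|w|\le|Z|/2$ after rescaling by $\|G\|$.

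The genuine difference is how the limit of $M^\varepsilon$ is identified. The paper applies the Jacod--Shiryaev criterion and checks convergence of the three predictable characteristics. This needs the ergodic LLN for the truncated second-moment matrix $\Phi$ and for $y\mapsto\int g(\delta(y)z)\,\nu(dz)$, plus a cut-off/mollification argument to reach every $g\in C_b$ vanishing near $0$. In return it gives convergence of $M^\varepsilon$ in $D([0,T];\mathbb R^d)$. Your characteristic-function route is more elementary. Set $\psi(\xi,y)=\int(1-\cos\langle\xi,\delta(y)z\rangle)\,\nu(dz)=c\,|\delta(y)^\top\xi|^\alpha$, which lies in $C_b^1$ because $\delta\in C_b^1$ and $\alpha>1$. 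Then $\exp\bigl(i\langle\xi,M^\varepsilon_t\rangle+\int_0^t\psi(\xi,Y_{s/\varepsilon})\,ds\bigr)$ is a bounded, hence true, martingale on $[0,T]$. The variance bound \eqref{mod-2} applied to $\psi(\xi,\cdot)-\mu(\psi(\xi,\cdot))$ then gives convergence of the characteristic functions, and a piecewise constant $\xi(s)$ handles the joint law of increments. Since $\varepsilon^{1/\alpha}(Y_{t/\varepsilon}-y)$ is only controlled at fixed times, f.d.d.\ convergence of $A^\varepsilon$ is all either route delivers, so nothing is lost.

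Drop the corrector detour in Steps 2--3, because the obstacle you anticipate does not exist. Since $\nu$ is symmetric and $\alpha>1$, applying \eqref{l2} to a coordinate function gives $\mathcal L y_i=f_i(y)$, because $\int_{|z|>1}\delta(y)z\,\nu(dz)=0$. Hence $\frac{d}{dt}\mathbb E Y^y_t=P_tf(y)$, and the contraction estimate with the first-moment bound gives $\int_0^\infty P_tf(y)\,dt=\lim_{T\to\infty}\mathbb E Y^y_T-y=\int x\,\mu(dx)-y$. So your corrector is affine with $\nabla\Phi\equiv-I$, and $\Phi(Y+\delta(Y)z)-\Phi(Y)=-\delta(Y)z$ exactly. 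There is no remainder $R$, no derivative-flow regularity and no \emph{asymptotic linearity at infinity} to prove; the corrector decomposition is literally the direct one.

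Also discard the heuristic that only jumps with $|z|>\varepsilon^{-1/\alpha}$ matter. After scaling, the compensated jumps with $|z|\le\varepsilon^{-1/\alpha}$ have variance of order $\varepsilon^{2/\alpha}\cdot\frac{t}{\varepsilon}\cdot\varepsilon^{-(2-\alpha)/\alpha}=t$, and they produce the small-jump part of the limiting stable law. Your final argument is correct because it keeps the full exponent $\int(1-\cos\langle\xi,\delta(Y_s)z\rangle)\,\nu(dz)$.
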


\section{$L^1$-exponential contractivity of \eqref{multca}}\label{con}

For $\eta>0$, we define
\begin{align}\label{vz}
V_\eta(z)=\sqrt{|z|^2+\eta^2},
\end{align}
then
\begin{align}\label{grad}
	|\nabla V_\eta(z)|
	=\bigg|\frac{z}{\sqrt{|z|^2+\eta^2}}\bigg|\leq1,\quad  \quad
	\|\nabla^2V_\eta(z)\|
	=\bigg\|\frac{1}{\sqrt{|z|^2+\eta^2}}I_{d}
	-\frac{zz^{\top}}
	{(|z|^2+\eta^2)^{3/2}}\bigg\|\leq
	\frac{1}{\sqrt{|z|^2+\eta^2}}, 
\end{align}
indeed, from $|\nabla V_\eta(z)|\leq1$, we can deduce that $V_\eta$ is globally $1$-Lipschitz continuous,
\begin{align}\label{v-lip}
	|V_\eta(y)-V_\eta(z)|\leq|y-z|.
\end{align}

\begin{lemma}[$L^1$-exponential contractivity of $Y_t$]	\label{str-contra}
	Let  Assumptions $\ref{dis}$--$\ref{gro-bou}$ hold,    
	with  $\lambda$ sufficiently large,
	then for any	$y_1,y_2\in\mathbb R^{d}$, there exists $\beta>0$ s.t.
	\begin{equation}\label{strong-est}
		\mathbb E|Y_t^{y_1}-Y_t^{y_2}|
		\leq	e^{-\beta t}|y_1-y_2|,
		\qquad t\geq0,
	\end{equation}
	then there exists a unique invariant measure $\mu$ of $\eqref{multca}$, and for any $g\in C_b^1$, there exists $C>0$,
	\begin{equation}\label{L-gen-exp}
		\left| P_{t}g(y)-\bar{g}\right| \leq C\cdot  \|\nabla g\|_\infty  e^{-\beta t}(1+|y|),\qquad t\geq0, \ y\in \mathbb R^{d}.
	\end{equation}
\end{lemma}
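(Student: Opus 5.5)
We couple the two solutions synchronously, driving $Y^{y_1}_t$ and $Y^{y_2}_t$ by the same Poisson random measure $N$ (with compensator $\tilde N$). Set $Z_t=Y^{y_1}_t-Y^{y_2}_t$ and $\Delta_t(z)=(\delta(Y^{y_1}_{t-})-\delta(Y^{y_2}_{t-}))z$. We apply It\^o's formula to $V_\eta(Z_t)$ with $V_\eta$ from \eqref{vz}, using small jumps $|z|\le1$ compensated and large jumps uncompensated. This gives
\begin{align*}
dV_\eta(Z_t)=\;&\langle \nabla V_\eta(Z_t), f(Y^{y_1}_t)-f(Y^{y_2}_t)\rangle dt+dM_t\\
&+\int_{|z|\le1}\big(V_\eta(Z_{t-}+\Delta_t(z))-V_\eta(Z_{t-})-\langle\nabla V_\eta(Z_{t-}),\Delta_t(z)\rangle\big)\nu(dz)\,dt\\
&+\int_{|z|>1}\big(V_\eta(Z_{t-}+\Delta_t(z))-V_\eta(Z_{t-})\big)\nu(dz)\,dt,
\end{align*}
where $M_t$ is a martingale. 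Its integrability follows from \eqref{v-lip}, the Lipschitz bound on $\delta$, and the finiteness of the first moment of $Y$ since $\alpha>1$.

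\textbf{Drift term.} By Assumption \ref{dis},
$$\langle \nabla V_\eta(Z),f(Y^{y_1})-f(Y^{y_2})\rangle\le -\lambda\frac{|Z|^2}{V_\eta(Z)}\le -\lambda V_\eta(Z)+\lambda\eta.$$

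\textbf{Small jumps.} Taylor's formula and \eqref{grad} bound the integrand by $\frac12|\Delta(z)|^2/V_\eta(Z)\le \frac12L_\delta^2|z|^2|Z|^2/V_\eta(Z)\le \frac12 L_\delta^2|z|^2V_\eta(Z)$. Since $\int_{|z|\le1}|z|^2\nu(dz)<\infty$, this contributes at most $C_1L_\delta^2V_\eta(Z)$.

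\textbf{Large jumps.} By \eqref{v-lip} the integrand is at most $L_\delta|z||Z|$. Since $\alpha>1$, $\int_{|z|>1}|z|\nu(dz)<\infty$, so this contributes at most $C_2L_\delta V_\eta(Z)$.

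Combining the three bounds and taking expectations gives
$$\frac{d}{dt}\mathbb E V_\eta(Z_t)\le -(\lambda-C_1L_\delta^2-C_2L_\delta)\,\mathbb EV_\eta(Z_t)+\lambda\eta.$$
This is where ``$\lambda$ sufficiently large'' enters: we need $\beta:=\lambda-C_1L_\delta^2-C_2L_\delta>0$. Gronwall's inequality then yields $\mathbb EV_\eta(Z_t)\le e^{-\beta t}\sqrt{|y_1-y_2|^2+\eta^2}+\lambda\eta/\beta$. Since $|Z|\le V_\eta(Z)$, letting $\eta\to0$ proves \eqref{strong-est}.

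The step I expect to be the main obstacle is making the It\^o formula and the expectation rigorous. Because the large jumps are heavy-tailed, one has to check that $M_t$ is a true martingale and not merely a local martingale. I would handle this by localizing with stopping times $\tau_n=\inf\{t:|Y^{y_1}_t|+|Y^{y_2}_t|>n\}$, applying the argument above up to $t\wedge\tau_n$, and passing to the limit with Fatou's lemma. This uses the standard moment bound $\sup_t\mathbb E|Y_t^y|\le C(1+|y|)$, which itself follows from the same Lyapunov computation applied to $V_1(Y_t)$ together with the dissipativity of $f$ and $|f(0)|<\infty$.

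\textbf{Invariant measure and \eqref{L-gen-exp}.} Estimate \eqref{strong-est} says $W_1(\delta_{y_1}P_t,\delta_{y_2}P_t)\le e^{-\beta t}|y_1-y_2|$. By coupling the initial laws, this extends to $W_1(\rho_1P_t,\rho_2P_t)\le e^{-\beta t}W_1(\rho_1,\rho_2)$ for probability measures with finite first moment. Hence $(\delta_yP_n)_n$ is Cauchy in the complete space $(\mathcal P_1,W_1)$, because $W_1(\delta_yP_{n+1},\delta_yP_n)\le e^{-\beta n}W_1(\delta_yP_1,\delta_y)\le e^{-\beta n}C(1+|y|)$ by the moment bound. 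Its limit $\mu$ is invariant by the Feller property and the semigroup property, and it is unique by the contraction. Finally, for $g\in C_b^1$,
$$|P_tg(y)-\bar g|=\Big|\int (P_tg(y)-P_tg(x))\mu(dx)\Big|\le\|\nabla g\|_\infty e^{-\beta t}\int|y-x|\mu(dx)\le C\|\nabla g\|_\infty e^{-\beta t}(1+|y|),$$
using $\int|x|\mu(dx)<\infty$, which is inherited from the uniform moment bound.
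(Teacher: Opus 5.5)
Your overall strategy is correct, but one justification needs repair. In the small-jump step, Taylor's formula together with \eqref{grad} does not give the bound $\tfrac12|\Delta(z)|^2/V_\eta(Z)$. The remainder involves $\nabla^2V_\eta(Z+\theta\Delta(z))$ along the whole segment, and \eqref{grad} controls it only by $1/V_\eta(Z+\theta\Delta(z))$. For $|z|\le1$ and $L_\delta\ge1$ the assumptions allow $|\Delta(z)|\ge|Z|$, so the segment may pass near the origin. There \eqref{grad} only gives $1/\eta$, which is useless as $\eta\to0$.

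The inequality you want is nevertheless true globally, by an exact identity rather than Taylor. Since $\nabla V_\eta(Z)=Z/V_\eta(Z)$,
\[
V_\eta(Z)+\langle\nabla V_\eta(Z),\Delta\rangle+\frac{|\Delta|^2}{2V_\eta(Z)}=\frac{V_\eta(Z)^2+V_\eta(Z+\Delta)^2}{2V_\eta(Z)}\ge V_\eta(Z+\Delta),
\]
by $a^2+b^2\ge2ab$. Alternatively, apply Taylor only for $|z|\le1/(2L_\delta)$, where $|\Delta(z)|\le|Z|/2$ keeps the segment away from $0$. On the remaining annulus use the first-order bound $2|\Delta(z)|$. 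This is essentially the split used in Lemma \ref{v-A}, transported back through the rescaling $w=\|G_t\|h$. With either fix your argument goes through.

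Compared with the paper, the skeleton is the same: synchronous coupling, It\^o's formula for $V_\eta(Z_t)$, dissipativity against a jump term linear in $|Z|$, Gronwall, and $\eta\to0$. The jump term is handled differently.

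\emph{The paper} rescales $h$ by $\|G_t\|$ using the $\alpha$-homogeneity of $\nu$. It then invokes Lemma \ref{v-A}, whose bound $C_{\alpha,d}|z|^{1-\alpha}$ yields the contribution $C_{\alpha,d}L_\delta^\alpha|Z_t|$. This requires $Z_t\neq0$ and the stable form of $\nu$.

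\emph{Your fixed split at $|z|=1$} uses only $\int(|z|^2\wedge|z|)\,\nu(dz)<\infty$. So it works for any L\'evy measure with finite first moment of large jumps, and it never divides by $|Z|$. The price is the cruder constant $C_1L_\delta^2+C_2L_\delta$. Splitting at $|z|=1/L_\delta$ instead recovers the paper's $L_\delta^\alpha$ scaling, since the compensator mismatch on the intermediate annulus integrates to zero by symmetry.

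Your linear differential inequality for $\mathbb EV_\eta(Z_t)$ also gives a cleaner Gronwall step than the paper's integrated form. When you localize, apply It\^o's formula to $e^{\beta t}V_\eta(Z_t)$ up to $t\wedge\tau_n$; then Fatou's lemma is used once, on a nonnegative quantity, and stopping does not destroy the Gronwall structure.

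For the invariant measure, your $W_1$-Cauchy argument replaces the paper's Krylov--Bogoliubov argument based on Lemma \ref{T32}. It yields $\mu\in\mathcal P_1$ directly, which is what the final estimate \eqref{L-gen-exp} needs. Your moment bound via $V_1$ needs no largeness of $\lambda$, because $\delta$ is bounded.
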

\begin{proof}
	Let $y_1,$ $y_2$ be any given initial values, define
	$$	Z_t=Y_t^{y_1}-Y_t^{y_2}, \quad Z_0=y_1-y_2,$$
	then
	$$
	dZ_t=F_tdt+G_tdL_t,
	$$
	where 
	\begin{align*}
		F_t=f(Y_t^{y_1})-f(Y_t^{y_2}),\quad  G_t=\delta(Y_{t-}^{y_1})-\delta(Y_{t-}^{y_2}),	
	\end{align*}
if $Z_t=0$ then $F_t=G_t=0$ and the two solutions remain identical thereafter,
the desired estimate is immediate,  thus we apply
Lemma \ref{v-A} on $\{Z_t\neq0\}$.

	By Assumption \ref{lip}, we have
	\begin{equation}\label{G-est}
		\|G_t\|\leq L_\delta |Z_{t-}|.
	\end{equation}

Applying the It\^o's formula to
	$V_\eta(Z_t)$ and taking expectation, we obtain
	\begin{align*}
		\frac{d}{dt}\mathbb EV_\eta(Z_t)
		&=\mathbb E
		\left\langle\nabla V_\eta(Z_t),F_t
		\right\rangle+\mathbb E
		\int_{\mathbb R^{d}\setminus\{0\}}
		\Big(V_\eta(Z_{t-}+G_th)-V_\eta(Z_{t-})
		-\langle\nabla V_\eta(Z_{t-}),G_th\rangle I_{\{|h|\leq1\}}\Big)\nu(dh)\\
		&=I_1^\eta(t)+I_2^\eta(t).
	\end{align*}

	For $I_1^\eta(t)$,  by \eqref{grad} 
 and Assumption \ref{dis} we get
	\begin{align}\label{i1}
	I_1^\eta(t)
		\leq 	-\lambda
		\mathbb E
		\frac{|Z_t|^2}
		{\sqrt{|Z_t|^2+\eta^2}}.
	\end{align}
	
	Next, we estimate $I_2^\eta(t)$.  If $\|G_t\|=0$, then $I_2^\eta(t)=0$. For $\{\|G_t\|>0\}$, we define
	\[
	K_t=\frac{G_t}{\|G_t\|},\qquad G_t=\|G_t\|K_t,
	\]
	let $w=\|G_t\| h$, then $$G_t h=\|G_t\| K_th=\|G_t\| K_t\frac{w}{\|G_t\|}=K_tw,$$
	by the scaling property of $\nu$, 
	\begin{align*}
		\nu(dh)
		=c_{\al,d} |h|^{-d-\alpha}dh
		=c_{\al,d} \left( \frac{|w|}{\|G_t\|} \right)^{-d-\alpha} \|G_t\|^{-d}dw 
		&= c_{\al,d} |w|^{-d-\alpha} \|G_t\|^{d+\alpha} \|G_t\|^{-d}dw = \|G_t\|^{\alpha}\,\nu(dw),
	\end{align*}
	then from \eqref{z1-2-A} in Lemma \ref{v-A}, and $\|K_t\|\leq1,$
	\begin{align*}
		&\int_{\mathbb R^{d}\setminus\{0\}}
		\Big(V_\eta(z+\|G_t\|\frac{G_t}{\|G_t\|}h)-V_\eta(z)
		-\langle\nabla V_\eta(z),\|G_t\|\frac{G_t}{\|G_t\|}h\rangle
		I_{\{|h|\leq1\}}	\Big)\nu(dh)\\
		&=\|G_t\|^{\alpha}
		\int_{\mathbb R^{d}\setminus\{0\}}
		\Big(V_\eta(z+K_tw)-V_\eta(z)-\langle\nabla V_\eta(z),K_tw\rangle I_{\{|w|\leq\|G_t\|\}}\Big)\nu(dw)\\
		&\leq	C_{\alpha,d}\|G_t\|^{\alpha}|z|^{1-\alpha},
	\end{align*}
	therefore,
\begin{align*}
I_2^\eta(t)	&=	\mathbb E\int_{\mathbb R^{d}\setminus\{0\}}
\Big(V_\eta(Z_{t-}+G_th)-V_\eta(Z_{t-})-\langle\nabla V_\eta(Z_{t-}),G_th\rangle I_{\{|h|\leq1\}}\Big)\nu(dh)\\
&=\mathbb E\bigg[\|G_t\|^{\alpha}\int_{\mathbb R^{d}\setminus\{0\}}\Big(V_\eta(Z_{t-}+K_tw)-V_\eta(Z_{t-})
-\langle\nabla V_\eta(Z_{t-}),K_tw\rangle
I_{\{|w|\leq\|G_t\|\}}\Big)\nu(dw)\bigg]\\
&\leq C_{\alpha,d}\mathbb E\left(\|G_t\|^{\alpha}	|Z_{t-}|^{1-\alpha}	\right),
\end{align*}
from \eqref{G-est}, we have
$
\|G_t\|^{\alpha}|Z_{t-}|^{1-\alpha}\leq L_\delta^{\alpha}
	|Z_{t-}|^{\alpha}|Z_{t-}|^{1-\alpha}=L_\delta^{\alpha}|Z_{t-}|,
$
since $(Z_t)_{t\geq0}$ is càdlàg, $Z_{t-}=Z_t$ for $dt\otimes d\mathbb P$-a.e. $t$,	thus,
\begin{align}\label{i2}
I_2^\eta(t)\leq C_{\alpha,d}L_\delta^{\alpha}\mathbb E|Z_{t}|.
\end{align}

	Combining \eqref{i1} and \eqref{i2}, we obtain
	\begin{equation}\label{d-v}
		\frac{d}{dt}\mathbb E V_\eta(Z_t)
		\leq-\lambda \mathbb E\bigg[\frac{|Z_t|^2}{\sqrt{|Z_t|^2+\eta^2}}\bigg]
		+C_{\alpha,d}L_\delta^{\alpha}\mathbb E|Z_t|,
	\end{equation}
	integrating \eqref{d-v} from $0$ to $t$, we obtain
	\begin{align}\label{inte-v}
	\mathbb E V_\eta(Z_t)- V_\eta(Z_0)\leq-\lambda\int_0^t \mathbb E\bigg[\frac{|Z_s|^2}{\sqrt{|Z_s|^2+\eta^2}}\bigg]ds+
	C_{\alpha,d}L_\delta^{\alpha}\int_0^t\mathbb E|Z_s|\,ds,
	\end{align}
by the monotone convergence theorem, passing $\eta$ to the limit in \eqref{inte-v}, we yield
\begin{align*}
\mathbb E|Z_t|	\leq |Z_0|-\left(	\lambda-C_{\alpha,d}L_\delta^{\alpha}	\right)\int_{0}^{t}	\mathbb E|Z_s|ds,
\end{align*}
	by Gronwall's inequality, there exists $\beta>0$ such that
$
	\mathbb E|Z_t|\leq e^{-\beta t}|Z_0|,
$
	which means that
	\begin{align*}
	\mathbb E|Y_t^{y_1}-Y_t^{y_2}|\leq e^{-\beta t}|y_1-y_2|.
	\end{align*}

For any bounded measurable function $g:\mathbb{R}^{d}\rightarrow \mathbb{R}$,  we define the semigroup
$ P_{t}g(y)=\mathbb{E}g(Y_{t}^{ y})$,  $t\geq 0,$  $y\in \mathbb{R}^{d}.$ By \eqref{3.6} and the Feller property, the Krylov--Bogoliubov theorem
gives the existence of an invariant measure $\mu$, while
\eqref{strong-est} yields its uniqueness.

Define the average as
$
\bar{g}=\mu(g)=\int_{\R^{d}}g(y)\mu(dy).
$
	 From \eqref{strong-est}, for $g\in C^1_b$, $\exists C>0$,  for any $ t\geq0, \ y\in \mathbb R^{d},$
\begin{align*}
	\left| P_{t}g(y)-\mu(g)\right| 
	\leq \bigg|  \int_{\R^{d}}\big(\mathbb{E}g(Y^{y}_t)-\mathbb{E}g(Y^{z}_t) \big) \mu(dz)\bigg|&\leq C \|\nabla g\|_\infty\int_{\R^{d}}	\mathbb E|Y_t^{y}-Y_t^{z}| \mu(dz)\\
	& \leq C\|\nabla g\|_\infty e^{-\beta t}(1+|y|).
\end{align*}
\end{proof}

\section{Proof of Theorem \ref{addi-func}}\label{add}

In this section we prove Theorem \ref{addi-func}, the method mainly refers to \cite[p.1094, Theorem 1]{BF}.

\begin{proof}
By the symmetry of $\nu$, the Lévy-Itô decomposition gives
\begin{equation*}
	Y_t-y=\int_0^t f(Y_s)\,ds+\int_0^t\delta(Y_{s-})\,dL_s=\int_0^t f(Y_s)\,ds+\int_0^t\int_{\mathbb R^d\setminus\{0\}}
	\delta(Y_{s-})z\,\tilde N(ds,dz),
\end{equation*}
then we have
\begin{equation}\label{a-m}
A_t^\varepsilon=\varepsilon^{1/\alpha}
\int_0^{t/\varepsilon}f(Y_s)\,ds=R_t^\varepsilon-M_t^\varepsilon,
\end{equation}
where
\begin{align*}
R_t^\varepsilon:=\varepsilon^{1/\alpha}\bigl(Y_{t/\varepsilon}-y\bigr),\quad	M_t^\varepsilon:=\varepsilon^{1/\alpha}\int_0^{t/\varepsilon}\int_{\mathbb R^d\setminus\{0\}}
\delta(Y_{s-})z\,\tilde N(ds,dz).
\end{align*}

We first deal with $R^\varepsilon_t$. By the moment estimate
\eqref{3.6}, with $m=1$ we obtain,
\begin{align}\label{R-est}
	\sup_{t\geq0}\mathbb E|R_t^\varepsilon|
	\leq\varepsilon^{1/\alpha}	\big(
	\sup_{t\geq0}\mathbb E|Y_{t/\varepsilon}|+|y|
	\big)=\varepsilon^{1/\alpha}
	\big(\sup_{t\geq0}\mathbb E|Y_t^\varepsilon|+|y|\big)
	\leq C\varepsilon^{1/\alpha}(1+|y|),
\end{align}
hence, for any fixed $t\ge0$,
 $R_t^\varepsilon\to0$ in $L^1$. Therefore, for any finite
collection $0\le t_1<\cdots<t_k\le T$,
\begin{align}\label{rt}
(R_{t_1}^\varepsilon,\ldots,R_{t_k}^\varepsilon)
\xrightarrow{\mathbb{P}}0,  \quad \text{as}\quad \varepsilon\to 0.
\end{align}

By the semimartingale
convergence theorem \cite[p.459, Theorem 1.18]{JS}, we identify the weak limit of $M^\varepsilon_t$ by verifying the convergence of predictable characteristics. It is sufficient to prove that they converge to those of an $\alpha$-stable process $A_t$
\cite[p.110]{ET}. Our proof mainly follows
\cite[p.1094, Theorem 1]{BF}.

We observe that the process $M^\varepsilon_{t}$ is a purely discontinuous local martingale. Then for the sequence $M^\varepsilon_{t}$, we denote by $B^\varepsilon_{t}$ the first
characteristic (the predictable finite variation drift), by
$\widetilde C^\varepsilon_{t}$ the modified second characteristic (the
predictable process collecting quadratic variation of the truncated small jumps), and by $\nu^\varepsilon$ the third characteristic (the
compensator of jump measure), we next
prove the following aspects:
\begin{enumerate}
	\item $B_{t}^\varepsilon=0$;    
	\item $\widetilde C^\varepsilon_t\to\widetilde C_t$ in probability for all $t\ge0$;
	\item $(g*\nu^\varepsilon)_t\to(g*\bar\nu)_t$ in probability for all $t\ge0$
	and $g\in C_b(\mathbb R^d)$ vanishing in a neighborhood of $0$.
\end{enumerate}

	Choose a smooth truncation function $\eta\in C_c^\infty(\mathbb R^d)$
	such that
	$$
	\eta(x)=x \ \text{for}\ |x|\le \frac12,
	\qquad	\eta(x)=0 \ \text{for}\ |x|\ge 1,	\qquad
	\text{and}\ \eta(-x)=-\eta(x).
	$$
	
	
	We split the following argument into $4$ steps.
	
	\textbf{Step 1:} For the first characteristic $B_t^\varepsilon$,  for fixed $y\in\mathbb R^d$, the function
	$
	z\mapsto \eta(\delta(y)z)-\delta(y)z
	$
	is integrable with respect to $\nu$ and is odd. As $\nu$ is
	symmetric, $\int_{\mathbb R^d\setminus\{0\}}
	(\eta(\delta(y)z)-\delta(y)z)\nu(dz)=0 $, then
	\begin{equation}\label{bv-b}
	B_t^\varepsilon=
	\int_0^t\int_{\mathbb R^d\setminus\{0\}}
	\big(\eta(\delta(Y_{s/\varepsilon-})z)
	-\delta(Y_{s/\varepsilon-})z\big)
	\nu(dz)\,ds=0.
		\end{equation}
The modified second characteristic collecting the quadratic variation of the truncated small jumps is
	\begin{align}	\label{sec-char}
	\widetilde C^\varepsilon_t
	&=\int_0^t	\int_{\mathbb R^d\setminus\{0\}}	\eta(\delta(Y_{s/\varepsilon-})z)
	\eta(\delta(Y_{s/\varepsilon-})z)^\top\nu(dz)\,ds.
	\end{align}
Let $\mathcal G_t^\varepsilon:=\mathcal F_{t/\varepsilon}$,
 the integral of $g\in C_b(\mathbb R^d)$ vanishing in a neighborhood of $0$ with respect to the third characteristic $\nu^\varepsilon$ is given by,  
\begin{align}\label{thir-char}
	(g*\nu^\varepsilon)_t=\int_0^t\int_{\mathbb R^d\setminus\{0\}}
	g(z)\,\nu^\varepsilon(ds,dz)=
	\int_0^t\int_{\mathbb R^d\setminus\{0\}}
	g(\delta(Y_{s/\varepsilon-})z)\,\nu(dz)\,ds.
\end{align}

\textbf{Step 2:} For the modified second characteristic $\widetilde C^\varepsilon_t$ defined in \eqref{sec-char}, we define
$$
\Phi(y)=\int_{\mathbb R^d\setminus\{0\}}
\eta(\delta(y)z)\,\eta(\delta(y)z)^\top\,\nu(dz),
$$
since $\delta\in C_b^1(\mathbb R^d)$ and $\eta\in C_c^\infty(\mathbb R^d)$, we have $\Phi\in C_b^1(\mathbb R^d)$. Choose a component
$\Phi_{ij}$ and set
$$
p(y):=\Phi_{ij}(y)-\mu(\Phi_{ij}),
$$
then $p\in C_b^1$, $\mu(p)=0$, and by  \eqref{L-gen-exp},
$$
|P_s p(y)|\le C e^{-\beta s}(1+|y|),
$$
by the Markov property,  for $r\le s$, we have $	\mathbb E[p(Y_{s-})p(Y_{r-})]
=\mathbb E[p(Y_{r-})(P_{s-r}p)(Y_{r-})]$, thus 
\begin{align*}
	|\mathbb E[p(Y_{s-})p(Y_{r-})]|
	\leqslant\mathbb E[|p(Y_{r-})||(P_{s-r}p)(Y_{r-})|]
	&\le C e^{-\beta(s-r)}\,\mathbb E[|p(Y_{r-})|(1+|Y_{r-}|)]\le Ce^{-\beta(s-r)}(1+|y|),
\end{align*}
where the last inequality follows from the boundedness of $p$ and the moment estimate \eqref{3.6},
therefore,
\begin{align}\label{mod-2}
	\mathbb E\bigg[
	\bigg(\varepsilon\int_0^{t/\varepsilon}p(Y_{s-})\,ds
	\bigg)^2\bigg]
	=2\varepsilon^2	\int_0^{t/\varepsilon}\int_0^{s}
	\mathbb E[p(Y_{s-})p(Y_{r-})]\,dr\,ds
	&\le2C\varepsilon^2(1+|y|)	\int_0^{t/\varepsilon}\int_0^{s}
	e^{-\beta(s-r)}\,dr\,ds\nonumber\\
	&\le\frac{2Ct}{\beta\varepsilon}\varepsilon^2(1+|y|)
	=C(1+|y|)\varepsilon,
\end{align}
which means that 
$$
\varepsilon\int_0^{t/\varepsilon}p(Y_{s-})\,ds
\xrightarrow{L^2}0  \quad \text{as}\quad \varepsilon\to 0,
$$
and consequently
$$
\varepsilon\int_0^{t/\varepsilon}\Phi_{ij}(Y_{s-})\,ds
\xrightarrow{L^2}t\,\mu(\Phi_{ij}),
$$
since this holds for any component, we get
$$
\varepsilon\int_0^{t/\varepsilon}\Phi(Y_{s-})\,ds
\xrightarrow{L^2}t\int_{\mathbb R^d}\Phi(y)\,\mu(dy).
$$
Finally, by \eqref{sec-char} and the change of variables
$r=s/\varepsilon$,
$$
\widetilde C^\varepsilon_t=\int_0^t
\int_{\mathbb R^d\setminus\{0\}}\eta(\delta(Y_{s/\varepsilon-})z)
\eta(\delta(Y_{s/\varepsilon-})z)^\top
\nu(dz)\,ds=\varepsilon\int_0^{t/\varepsilon}\Phi(Y_{r-})\,dr,
$$
therefore,
\begin{align}\label{cv-c}
\widetilde C^\varepsilon_t
\xrightarrow{L^2}
\widetilde C_t=t\int_{\mathbb R^d}\Phi(y)\,\mu(dy)=t\int_{\mathbb R^d}
\int_{\mathbb R^d\setminus\{0\}}\eta(\delta(y)z)\eta(\delta(y)z)^\top\nu(dz)\mu(dy)\quad \text{as}\quad \varepsilon\to 0.
\end{align}

\textbf{Step 3:}
For the third characteristic $(g*\nu^\varepsilon)_t$ given in
\eqref{thir-char}, we need to prove that
\[
(g*\nu^\varepsilon)_t
\xrightarrow{\mathbb P}
(g*\bar\nu)_t\quad \text{as}\quad \varepsilon\to 0,
\]
for any $g\in C_b(\mathbb R^d)$ vanishing in a neighborhood of  $0$.
Since \eqref{L-gen-exp} applies to $C_b^1(\mathbb R^d)$ functions, we consider
\[
g\in C_c^\infty(\mathbb R^d),\qquad
g=0\ \text{in a neighborhood of}\ 0,
\]
then there exists $\gamma>0$ such that
$
g(z)=0$ for $|z|\leq\gamma,
$ define
\[
\Psi(y):=
\int_{\mathbb R^d\setminus\{0\}}
g(\delta(y)z)\,\nu(dz),
\]
then $ |\Psi(y)|\leq
\|g\|_\infty
\nu(\{|z|>\gamma/C_\delta\})
< \infty$, and $\|\nabla\Psi\|_\infty
\leq
\|\nabla g\|_\infty\|\nabla \delta\|_\infty
\int_{\{|z|>\gamma/C_\delta\}}|z|\,\nu(dz)
<\infty,$ 
thus
$
\Psi\in C_b^1(\mathbb R^d).
$
Let
$
p(y):=\Psi(y)-\mu(\Psi),
$
then $p\in C_b^1(\mathbb R^d)$, $\mu(p)=0$, by
\eqref{L-gen-exp},
\[
|P_s p(y)|\le Ce^{-\beta s}(1+|y|),
\]
repeating the argument leading to \eqref{mod-2}, we obtain
\[
\mathbb E\bigg[
\bigg(\varepsilon\int_0^{t/\varepsilon}p(Y_{s-})\,ds
\bigg)^2\bigg]\le C\varepsilon\longrightarrow0\quad \text{as}\quad \varepsilon\to 0,
\]
thus,
\[
\varepsilon\int_0^{t/\varepsilon}\Psi(Y_{s-})\,ds
\xrightarrow{L^2}t\,\mu(\Psi).
\]
By \eqref{thir-char} and the change of variables $r=s/\varepsilon$,
\[
(g*\nu^\varepsilon)_t=\varepsilon\int_0^{t/\varepsilon}\Psi(Y_{r-})\,dr,
\]
hence for $g\in C_c^\infty(\mathbb R^d)$ vanishing 
in a neighborhood of $0$,    as $\varepsilon\to 0$,
\begin{align}\label{thir}
	(g*\nu^\varepsilon)_t
	\xrightarrow{L^2}
	t\mu(\Psi)=t\int_{\mathbb R^d}	\int_{\mathbb R^d\setminus\{0\}}	g(\delta(y)z)\nu(dz)\mu(dy)&=t\int_{\mathbb R^d\setminus\{0\}}	g(z) \int_{\mathbb R^d}
	\left(\nu\circ F_y^{-1}\right)(dz)\mu(dy)\nonumber\\
	&=t\int_{\mathbb R^d\setminus\{0\}}	g(z) \bar \nu(dz)=(g*\bar \nu)_t,
\end{align}
where $\bar \nu(dz)=\int_{\mathbb R^d}
\left(\nu\circ F_y^{-1}\right)(dz)\mu(dy) $  and $F_y(z)=\delta(y)z$.

Next, for any
$g\in C_b(\mathbb R^d)$ vanishing in a neighborhood of  $0$, let
$r_0>0$ be such that
\[
g(z)=0,\qquad |z|\le r_0.
\]
For $R>1$, choose cut-off function $\chi_R\in C_c^\infty(\mathbb R^d)$ satisfying
\[
\chi_R(z)=1\ \text{for}\ |z|\le R,\qquad
\chi_R(z)=0\ \text{for}\ |z|\ge 2R,
\]
and set
\[
g_R(z):=g(z)\chi_R(z),
\]
then $g_R$ has compact support and $g_R=0$ on $B_{r_0}(0)$.
Let $\rho^\kappa$ be a standard mollifier  with
$\operatorname{supp}\rho^\kappa\subset B_\kappa(0)$ and define 
\[
g_R^\kappa:=g_R\ast\rho^\kappa, 
\]
for $\kappa<r_0/2$, we have
\[
g_R^\kappa=0\quad\text{on}\quad  B_{r_0/2}(0),
\]
and for each fixed $R$,
\[
g_R^\kappa\in C_c^\infty(\mathbb R^d),
\qquad\|g_R^\kappa-g_R\|_\infty\longrightarrow0\quad \text{as}\quad \kappa\to0.
\]
In particular, we let $\kappa=\kappa(R)<r_0/2$ sufficiently small such that  $\kappa(R)\to0 $ as $R\to \infty$, and
\[
\|g_R^{\kappa(R)}-g_R\|_\infty\le R^{-1},
\]
by \eqref{thir-char}, there exists $C>0$ s.t.
\begin{align}\label{kapp}
	\left|	(g_R^{\kappa(R)}*\nu^\varepsilon)_t	-	(g_R*\nu^\varepsilon)_t	\right|	\le t\|g_R^{\kappa(R)}-g_R\|_\infty	\int_{\{|z|\ge r_0/(2C_\delta)\}}\nu(dz)	\le C tR^{-1},
\end{align}
on the other hand, since $g_R(z)=g(z)$ for $\{|z|\le R\}$, 
\begin{align}\label{r}
	\left|	(g_R*\nu^\varepsilon)_t	-(g*\nu^\varepsilon)_t
	\right|	\le	t\|g\|_\infty\int_{\{|z|>R/C_\delta\}}\nu(dz)	\le CtR^{-\alpha},
\end{align}
here we used the fact that
$
\nu(\{|z|>R\})\le CR^{-\alpha}.
$
Combining \eqref{kapp} and \eqref{r}, we obtain
\begin{align}\label{k-a}
\left|
(g_R^{\kappa(R)}*\nu^\varepsilon)_t-(g*\nu^\varepsilon)_t
\right|\longrightarrow0\quad \text{as} \quad R\to\infty,
\end{align}
uniformly in $\varepsilon$.
Similarly, since
$
\bar\nu(\{|z|>R\})
\le\int_{\mathbb R^d}
\nu(\{|z|>R/C_\delta\})\,\mu(dy)
\le CR^{-\alpha},
$
we have
\begin{align*}
	\bigg|	\int_{\mathbb R^d\setminus\{0\}} g_R^{\kappa(R)}(z)\,\bar\nu(dz)	-	\int_{\mathbb R^d\setminus\{0\}} g_R(z)\,\bar\nu(dz)	\bigg|	&\le
	R^{-1}\,\bar\nu(\{|z|\ge r_0/2\}),
\end{align*}
and
\begin{align*}
	\bigg|	\int_{\mathbb R^d\setminus\{0\}} g_R(z)\,\bar\nu(dz)	-\int_{\mathbb R^d\setminus\{0\}} g(z)\,\bar\nu(dz)
	\bigg|	\le	\|g\|_\infty\bar\nu(\{|z|>R\})
	\le CR^{-\alpha},
\end{align*}
thus,
\begin{align}\label{r-r-2}
	\bigg|\int_{\mathbb R^d\setminus\{0\}}
g_R^{\kappa(R)}(z)\,\bar\nu(dz)-\int_{\mathbb R^d\setminus\{0\}}
g(z)\,\bar\nu(dz)	\bigg|
\longrightarrow 0
 \quad \text{as}\quad R\to\infty.
\end{align}

Since $g_R^{\kappa(R)}\in
C_c^\infty(\mathbb R^d\setminus\{0\})$, \eqref{thir} applies to
$g_R^{\kappa(R)}$.  Combining \eqref{thir}, \eqref{k-a} and \eqref{r-r-2},
letting $\varepsilon\to 0$ and $R\to\infty$, for any
$g\in C_b(\mathbb R^d)$ vanishing in a neighborhood of  $0$, we obtain 
\begin{align}\label{gv-g}
	(g*\nu^\varepsilon)_t
	\xrightarrow{\mathbb P}
	(g*\bar\nu)_t=t\int_{\mathbb R^d\setminus\{0\}}
	g(z)\,\bar\nu(dz) \quad \text{as}\quad \varepsilon\to 0,
\end{align}
where $\bar{\nu}$ is a symmetric L\'evy measure, defined by
\begin{align}\label{A-mea}
	\bar\nu(B):=\int_{\mathbb R^d}
	\left(\nu\circ F_y^{-1}\right)(B)\,\mu(dy),\quad F_y(z)=\delta(y)z,\quad B\in\mathcal{B}(\mathbb{R}^d\setminus\{0\}),
\end{align}
 the non-degeneracy of $\bar{\nu}$ follows from Assumption \ref{gro-bou}, 
 and $$\bar{\nu}(-B)
=\int_{\mathbb{R}^d}(\nu\circ F_y^{-1})(-B)\,\mu(dy)
=\int_{\mathbb{R}^d}(\nu\circ F_y^{-1})(B)\,\mu(dy)
=\bar{\nu}(B),$$ then for any $a>0$, $\bar\nu(aB)=a^{-\alpha}\bar\nu(B),$  and 
$$
\int_{\mathbb{R}^d\setminus\{0\}}(1\wedge |z|^2)\bar{\nu}(dz)
=\int_{\mathbb{R}^d}\int_{\mathbb{R}^d\setminus\{0\}}
(1\wedge|\delta(y)z|^2)\nu(dz)\,\mu(dy)
\leq C\int_{\mathbb{R}^d\setminus\{0\}}(1\wedge|z|^2)\nu(dz)<\infty.
$$

\textbf{Step 4:}
From \eqref{bv-b}, \eqref{cv-c} and \eqref{gv-g}, the three
characteristics of $M^\varepsilon_{t}$ converge to those
of a symmetric $\alpha$-stable process $A_{t}$ with Lévy measure
defined by \eqref{A-mea}.  
Following the argument in \cite[p.1094, Theorem 1]{BF},  from \cite[p.459, Theorem 1.18]{JS} we have
$$
(M^\varepsilon_{t})_{t\in [0,T]} \xrightarrow{\mathcal D} (A_{t})_{t\in [0,T]}
\quad\text{in}\quad D([0,T];\mathbb R^d), \quad M^\varepsilon_{0}=0,
$$
by \eqref{a-m}, \eqref{rt}, and Slutsky's theorem, for
any finite collection 
$0\leq t_1<\cdots<t_k\leq T$, 
\[
(A_{t_1}^\varepsilon,\ldots,A_{t_k}^\varepsilon)
\xrightarrow{\mathcal D}
(-A_{t_1},\ldots,-A_{t_k})
\overset{\mathcal D}{=}
(A_{t_1},\ldots,A_{t_k}),
\]
we used  the symmetry of  $A_{t}$ in the last equality. Since $t_1,\ldots,t_k$ and $T>0$ are arbitrary,
we obtain
\[
(A^\varepsilon_t)_{t\geq0}\xrightarrow{\varepsilon\to 0}(A_t)_{t\geq0}, \quad   \text{ in finite-dimensional distributions,}
\]
the  proof is complete.
\end{proof}


\appendix

\section{Some technical estimates}\label{tech}

\begin{lemma}\label{v-A}
	
	For any matrix
	$K\in\mathbb R^{d\times d}$ with $\|K\|\leq1$, for any $R>0$, $ z\in \mathbb R^{d}\setminus\{0\}$,
	we have
	\begin{equation}\label{z1-2-A}
		\int_{\mathbb R^{d}\setminus\{0\}}\Big(V_\eta(z+Kh)-V_\eta(z)-\langle\nabla V_\eta(z),Kh\rangle
		I_{\{|h|\leq R\}}\Big)\nu(dh)\leq C_{\alpha,d}|z|^{1-\alpha},
	\end{equation}
	where $V_\eta(z)$ is defined in \eqref{vz}, and the constant $C_{\alpha,d}=
	c_{\alpha,d}|\mathbb S^{d-1}|
	\left(	2^{\alpha-2}/(2-\alpha)+2^{\alpha}/(\alpha-1)
	\right)$.
\end{lemma}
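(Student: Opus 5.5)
The plan is to split the $h$-integral at the scale $r:=|z|/2$, which is natural for $V_\eta$ near $z$. First I make the compensator threshold irrelevant. For $0<a<b<\infty$, the function $h\mapsto\langle\nabla V_\eta(z),Kh\rangle$ is odd and $\nu$-integrable on the annulus $\{a<|h|\le b\}$. Since $\nu$ is symmetric, its integral over any such annulus vanishes. Hence, in \eqref{z1-2-A}, I may replace $I_{\{|h|\le R\}}$ by $I_{\{|h|\le r\}}$ without changing the value of the integral; this is legitimate once the two pieces below are shown to be absolutely convergent. So I write the integral as $J_1+J_2$, with $J_1$ over $\{|h|\le r\}$ (compensated) and $J_2$ over $\{|h|>r\}$ (uncompensated).

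For $J_1$ I use a second-order Taylor expansion. Since $\|K\|\le1$, we have $|Kh|\le|h|\le|z|/2$. Therefore every point $w=z+\theta Kh$ with $\theta\in[0,1]$ satisfies $|w|\ge|z|/2$. By \eqref{grad}, $\|\nabla^2V_\eta(w)\|\le (|w|^2+\eta^2)^{-1/2}\le 2/|z|$. This gives
\[
V_\eta(z+Kh)-V_\eta(z)-\langle\nabla V_\eta(z),Kh\rangle\le \tfrac12\cdot\tfrac{2}{|z|}|h|^2=\frac{|h|^2}{|z|}.
\]
Integrating in polar coordinates against $\nu(dh)=c_{\alpha,d}|h|^{-d-\alpha}dh$ then yields
\[
J_1\le \frac{c_{\alpha,d}|\mathbb S^{d-1}|}{|z|}\int_0^{|z|/2}\rho^{1-\alpha}\,d\rho=c_{\alpha,d}|\mathbb S^{d-1}|\frac{2^{\alpha-2}}{2-\alpha}|z|^{1-\alpha},
\]
using $\alpha<2$.

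For $J_2$ there is no compensator, and I use only the global $1$-Lipschitz property \eqref{v-lip}: $V_\eta(z+Kh)-V_\eta(z)\le|Kh|\le|h|$. This gives
\[
J_2\le c_{\alpha,d}|\mathbb S^{d-1}|\int_{|z|/2}^\infty\rho^{-\alpha}\,d\rho=c_{\alpha,d}|\mathbb S^{d-1}|\frac{2^{\alpha-1}}{\alpha-1}|z|^{1-\alpha},
\]
and here $\alpha>1$ is essential for convergence. Since $2^{\alpha-1}\le2^\alpha$, the sum $J_1+J_2$ is bounded by the stated $C_{\alpha,d}|z|^{1-\alpha}$. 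The bound is uniform in $\eta$, in $K$ with $\|K\|\le1$, and in $R$.

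The main subtlety is the first step, namely the independence of the cutoff $R$. The symmetry argument needs integrability of the odd term on the annulus between $r$ and $R$, which holds because that annulus is bounded away from $0$ and $\infty$. It also needs the two pieces $J_1,J_2$ to be separately absolutely convergent, which the estimates above provide: the integrand of $J_1$ is $O(|h|^2)$ near the origin, and the integrand of $J_2$ is $O(|h|)$ at infinity with $\alpha>1$. Everything else is routine.
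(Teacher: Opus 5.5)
Your proof is correct and follows the paper's route. Both split at $|h|=|z|/2$, use the second-order Taylor expansion with $\|\nabla^2V_\eta\|\le 2/|z|$ on the inner region, and use the $1$-Lipschitz bound on the outer region.

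The only difference is how the cutoff $R$ is handled. You use the symmetry of $\nu$ at the start to move the compensator threshold to $|z|/2$. The paper instead keeps $I_{\{|h|\le R\}}$, symmetrizes the inner integral into a second difference, and bounds the leftover compensator on the outer region crudely by $|Kh|$ via $|\nabla V_\eta|\le1$. That is why its outer constant is $2^{\alpha}/(\alpha-1)$ rather than your sharper $2^{\alpha-1}/(\alpha-1)$.
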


\begin{proof}
We split the integral into
	\begin{align*}
		H_1&=\int_{\{0<|h|\leq |z|/2\}}\left( V_\eta(z+Kh)-V_\eta(z)
		-\langle\nabla V_\eta(z),Kh\rangle I_{\{|h|\leq R\}}\right) \nu(dh),\\
		H_2&=\int_{\{|h|>|z|/2\}}\left( V_\eta(z+Kh)-V_\eta(z)
		-\langle\nabla V_\eta(z),Kh\rangle	I_{\{|h|\leq R\}}\right) \nu(dh),
	\end{align*}
	indeed, for $H_1$, by Taylor's formula, 
	$$V_\eta(z+Kh)=V_\eta(z)+\langle \nabla V_\eta(z), Kh\rangle
	+\int_0^1 (1-\theta)
	(Kh)^\top \nabla^2 V_\eta(z+\theta Kh) Kh\, d\theta,$$
	by the symmetry of $\nu$, the first-order term vanishes, and  \eqref{grad},  we have
	\begin{align*}
		H_1&=\int_{\{0<|h|\leq |z|/2\}}
		\Big(V_\eta(z+Kh)-V_\eta(z)
		-\langle\nabla V_\eta(z),Kh\rangle	I_{\{|h|\leq R\}}	\Big)\nu(dh)	\\
		&=	\frac12	\int_{\{0<|h|\leq |z|/2\}}
		\Big(V_\eta(z+Kh)+V_\eta(z-Kh)-2V_\eta(z)\Big)\nu(dh)
		\\
		&=   \frac12	\int_{\{0<|h|\leq |z|/2\}}	\Big(	\int_0^1(1-\theta)	(Kh)^{\top}
		\Big(\nabla^2V_\eta(z+\theta Kh)	+\nabla^2V_\eta(z-\theta Kh)\Big)Kh\,d\theta	\Big)\nu(dh)\\
		&\leq	\int_{\{0<|h|\leq |z|/2\}}\frac{|Kh|^2}{|z|}\nu(dh)\leq \frac{c_{\alpha,d}|\s^{d-1} |}{|z|}\int_0^{|z|/2}r^2 r^{-d-\alpha}r^{d-1}dr=	\frac{2^{\alpha-2}}{2-\alpha} c_{\alpha,d}|\s^{d-1}| |z|^{1-\alpha},
	\end{align*}
 in the first inequality, we used $\|K\|\leq1$, which gives
	$|Kh|\leq |h|$, and hence
	$$
	|z|/2  \leq |z|-|Kh|\leq   |z|-|\theta Kh| \leq|z\pm\theta Kh|,
	\qquad 0\leq\theta\leq1,
	$$
	and 
	$\|\nabla^2V_\eta(z\pm\theta Kh)\|\leq\frac{1}{|z\pm\theta Kh|}\leq\frac{2}{|z|}.$
	
	For $H_2$, by \eqref{grad} and global 1-Lipschitz continuity of $V_\eta$ in \eqref{v-lip}, and $\|K\|\leq1$, we obtain
	\begin{align*}
		H_2&=\int_{\{|h|>|z|/2\}}
		\Big(
		V_\eta(z+Kh)-V_\eta(z)
		-\langle\nabla V_\eta(z),Kh\rangle I_{\{|h|\leq R\}}
		\Big)\nu(dh)\\
		&\leq	2\int_{\{|h|>|z|/2\}}|Kh|\nu(dh)
		\leq	2c_{\alpha,d}|\s^{d-1} |\int_{|z|/2}^{\infty}r r^{-d-\alpha}r^{d-1}dr\leqslant	\frac{2^{\alpha}}{\alpha-1}c_{\alpha,d}|\s^{d-1}| |z|^{1-\alpha}.
	\end{align*}
	
	Combining the above estimates we yield
	\begin{equation*}
		\int_{\mathbb R^{d}\setminus\{0\}}
		\Big(	V_\eta(z+Kh)-V_\eta(z)
		-\langle\nabla V_\eta(z),Kh\rangle
		I_{\{|h|\leq R\}}\Big)\nu(dh)\leq C_{\alpha,d}|z|^{1-\alpha},
	\end{equation*}
	where $C_{\alpha,d}=
	c_{\alpha,d}|\mathbb S^{d-1}|
	\left(	2^{\alpha-2}/(2-\alpha)+2^{\alpha}/(\alpha-1)
	\right)$, 
	the proof is complete.
\end{proof}

Next we establish the moment estimate for the rescaled process
$Y_t^\varepsilon$, which has the same law as $Y_{t/\varepsilon}$.
Recall that $L_t$ denotes the isotropic $\alpha$-stable process in \eqref{multca}, then the Poisson random measures can be represented as \cite{DA},
$$ N(t,B)=\sum_{s\leq t}1_{B}(L_{s}-L_{s-}),\quad \forall B\in \mathcal{B}(\mathbb{R}^{d }\setminus\{0\}),$$
by L\'{e}vy-It\^{o} decomposition and symmetry of $\nu(dz)$, we have for any $r>0$
\begin{equation*}
	 L_{t}=\int_{|z|\leq r}z \tilde{N}(t,dz)+\int_{|z|> r}zN(t,dz).
\end{equation*}

Define
\[
L_t^\varepsilon:=\varepsilon^{1/\alpha}L_{t/\varepsilon},
\qquad t\ge0,
\]
by the self-similarity of the isotropic $\alpha$-stable
process, $L^\varepsilon_t$ is again an isotropic
$\alpha$-stable process and
\[
(L_t^\varepsilon)_{t\ge0}\overset{\mathcal D}{=}(L_t)_{t\ge0},
\]
 the time-rescaled process
$(Y_{t/\varepsilon})_{t\ge0}$ has the same law as the
unique strong solution $(Y_t^\varepsilon)_{t\ge0}$ of
\begin{equation}\label{3.2}
	dY_t^\varepsilon
	=\frac1\varepsilon f(Y_t^\varepsilon)\,dt	+\frac1{\varepsilon^{1/\alpha}}
	\delta(Y_{t-}^\varepsilon)\,dL_t,	\qquad
	Y_0^\varepsilon=y,
\end{equation}
then \eqref{3.2}
can equivalently be written as
\begin{align}\label{3.2-1}
	dY_t^\varepsilon
	=\frac1\varepsilon f(Y_t^\varepsilon)\,dt
	+\frac1{\varepsilon^{1/\alpha}}
	\int_{|z|\le\varepsilon^{1/\alpha}}
	\delta(Y_{t-}^\varepsilon)z\,\tilde N(dt,dz)
	+\frac1{\varepsilon^{1/\alpha}}
	\int_{|z|>\varepsilon^{1/\alpha}}
	\delta(Y_{t-}^\varepsilon)z\,N(dt,dz).
\end{align}

\begin{lemma}\label{T32}
	Let Assumptions $\ref{dis}$--$\ref{gro-bou}$ hold.
For any initial data $Y_0=y\in\mathbb{R}^{d}$, the time-rescaled process
$(Y_{t/\varepsilon})_{t\geq0}$ has the same law as the unique strong
solution $(Y_t^\varepsilon)_{t\geq0}$ of \eqref{3.2} with
$Y_0^\varepsilon=y$.
Moreover, for any $m\in[1,\alpha)$, there exists
	$C_m>0$ 
	such that
	\begin{equation}\label{3.6}
		\sup_{\varepsilon\in(0,1)}
		\sup_{t\geq0}\mathbb{E}|Y_t^\varepsilon|^m
		\leq C_m(1+|y|^m).
	\end{equation}
\end{lemma}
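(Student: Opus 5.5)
The proof has two parts. First, the identification in law of $(Y_{t/\varepsilon})_{t\ge0}$ with the solution of \eqref{3.2}. Second, the uniform moment bound \eqref{3.6}, which I will obtain from a Lyapunov argument with $V_\eta$ raised to the power $m<\alpha$.

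\textbf{Identification in law.} Set $\tilde Y_t:=Y_{t/\varepsilon}$. Substituting $s=r/\varepsilon$ gives
\[
\tilde Y_t=y+\frac1\varepsilon\int_0^t f(\tilde Y_r)\,dr+\int_0^{t}\delta(\tilde Y_{r-})\,d(L_{r/\varepsilon}),
\]
and $L_{r/\varepsilon}=\varepsilon^{-1/\alpha}L^\varepsilon_r$. So $\tilde Y$ solves \eqref{3.2} driven by $L^\varepsilon$. Since $L^\varepsilon\overset{\mathcal D}{=}L$, and strong existence and pathwise uniqueness hold under Assumption \ref{lip} (Lipschitz coefficients), Yamada--Watanabe gives uniqueness in law. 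Hence $\tilde Y\overset{\mathcal D}{=}Y^\varepsilon$. The representation \eqref{3.2-1} is just the L\'evy--It\^o decomposition with cutoff $r=\varepsilon^{1/\alpha}$, using the symmetry of $\nu$.

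\textbf{Moment bound.} Since $Y^\varepsilon_t\overset{\mathcal D}{=}Y_{t/\varepsilon}$, it suffices to show $\sup_{t\ge0}\mathbb E|Y_t|^m\le C_m(1+|y|^m)$ for the original equation; the bound is then automatically uniform in $\varepsilon$. Take $W(x)=V_1(x)^m=(|x|^2+1)^{m/2}$ and estimate $\mathcal L W$ in two pieces.

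\emph{Drift part.} Assumption \ref{dis} with $y_2=0$ gives $\langle f(x),x\rangle\le-\lambda|x|^2+|f(0)||x|$. Hence
\[
\langle f(x),\nabla W(x)\rangle=mV_1^{m-2}\langle f(x),x\rangle\le -\lambda m W(x)+C.
\]

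\emph{Jump part.} Split the $h$-integral at $|h|=V_1(x)$, mimicking Lemma \ref{v-A}.
\begin{itemize}
\item For small jumps, use $\|\nabla^2W\|\le C V_1^{m-2}$ together with $|\delta(x)h|\le C_\delta|h|$ (Assumption \ref{gro-bou}). This gives $C V_1^{m-2}\int_{|h|\le V_1}|h|^2\nu(dh)=CV_1^{m-\alpha}$.
\item For large jumps, use the elementary inequality $|a+b|^m\le |a|^m+|b|^m$-type bounds. Specifically, $W(x+\delta h)-W(x)\le C(V_1^{m-1}|h|+|h|^m)$. The compensator term in this region is bounded by $CV_1^{m-1}|h|$ on $\{V_1<|h|\le1\}$.
\end{itemize}
Since $m<\alpha$, $\int_{|h|>V_1}|h|^m\nu(dh)=CV_1^{m-\alpha}$, and $\int_{|h|>V_1}|h|\nu(dh)=CV_1^{1-\alpha}$ because $\alpha>1$. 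All jump contributions are therefore $O(V_1^{m-\alpha})$, which is $\le \frac{\lambda m}{2}W+C$ because $m-\alpha<m$.

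\textbf{Conclusion via Dynkin.} Combining the two parts, $\mathcal LW\le-\frac{\lambda m}{2}W+C$. Dynkin's formula, with localization by stopping times and Fatou's lemma, then gives
\[
\mathbb EW(Y_t)\le e^{-\lambda m t/2}W(y)+\frac{2C}{\lambda m}.
\]
This yields \eqref{3.6}.

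\textbf{Main obstacle.} The delicate step is the jump estimate. We need $m<\alpha$ for integrability of $|h|^m$ at infinity, and must handle the compensator on the intermediate region carefully so that every term grows strictly slower than $W$. The localization also needs care, since $\mathbb E|Y_t|^m$ is not known to be finite a priori; stopping at exit times of balls handles this.
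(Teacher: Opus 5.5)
Your argument is correct, and it takes a different route from the paper's. The paper never reduces to $\varepsilon=1$. It applies It\^o's formula to $U(Y_t^\varepsilon)=(|Y_t^\varepsilon|^2+1)^{m/2}$ directly for \eqref{3.2-1} and rescales the jump variable so that every term carries a factor $1/\varepsilon$. It then splits at the fixed radius $1$ and bounds the large-jump term by $\frac{C_m}{\varepsilon}+\frac{c_m}{\varepsilon}\mathbb E U(Y_t^\varepsilon)$, i.e.\ linearly in $U$. To absorb this term it must take $\lambda$ so large that $c_{m,\lambda}>c_m$, and uniformity in $\varepsilon$ comes from the $1/\varepsilon$ factors cancelling in the Gronwall bound.

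Your approach gains in two places. Reducing to the unscaled equation via $Y_t^\varepsilon\overset{\mathcal D}{=}Y_{t/\varepsilon}$ makes uniformity in $\varepsilon$ automatic. Your state-dependent split at $|h|\sim V_1(x)$ makes the whole jump contribution $O(V_1^{m-\alpha})=O(1)$, so any $\lambda>0$ works. This proves the lemma exactly as stated; the paper's proof needs ``$\lambda$ sufficiently large'', which is not among the lemma's hypotheses. Even with the paper's split at $1$, the large-jump term is really $C(1+|x|^{m-1})=o(U)$, so Young's inequality would have removed that requirement too. You also supply the localization and the time-change/pathwise-uniqueness argument for the identification in law, which the paper takes for granted. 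What the paper's route buys is only brevity: fixed split radius and crude bounds.

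Three details need tidying.
\begin{enumerate}
\item For $m>1$ and $f(0)\neq0$, your intermediate drift bound $-\lambda mW+C$ is false, since $mV_1^{m-2}|f(0)||x|\sim|x|^{m-1}$ is unbounded. Absorb this term by Young into $\frac{\lambda m}{2}W+C$, which is all your final inequality uses.
\item The generator has no compensator on the annulus $\{1<|h|\le V_1(x)\}$. (The region $\{V_1<|h|\le1\}$ you mention is empty, since $V_1\ge1$.) Insert $\langle\nabla W(x),\delta(x)h\rangle$ there using the symmetry of $\nu$ before Taylor-expanding, as Lemma \ref{v-A} does for arbitrary $R$.
\item The bound $\|\nabla^2W(x+\theta\delta(x)h)\|\le CV_1(x)^{m-2}$ requires something like $|\delta(x)h|\le V_1(x)/2$. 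So split at $|h|=V_1(x)/(2C_\delta)$ rather than at $V_1(x)$. Alternatively, the crude bound $\|\nabla^2W\|\le C$ gives $CV_1^{2-\alpha}$, which is still $o(W)$ because $2-\alpha<1\le m$.
\end{enumerate}
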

\begin{proof}
	Under Assumption $\ref{lip}$,  Assumption $\ref{gro-bou}$ on $f$ and $\delta$,	we derive the well-posedness of \eqref{3.2} by \cite[Theorem 6.2.3, Theorem 6.2.9, Theorem 6.2.11]{DA}. The proof of \eqref{3.6} refers to \cite[Lemma A.1]{SXX}.	
	
	From \eqref{3.2-1},
	due to the fact that $m<\alpha<2$,
	 we define   
$
				U(y)=(|y|^{2}+1)^{\frac{m}{2}},
$
	\begin{equation}\label{du}
			 |DU(y)|=\left|  \frac{my}{(|y|^{2}+1)^{1-\frac{m}{2}}}\right|\leq C_{m} |y|^{m-1},
	\end{equation}
	\begin{equation}\label{d2u}
			|D^{2}U(y)|=\left|   \frac{mI_{d\times d }}{(|y|^{2}+1)^{1-\frac{m}{2}}}-\frac{m(m-2)y\otimes y}{(|y|^{2}+1)^{2-\frac{m}{2}}} \right|\leq \frac{C_{m}}{(|y|^{2}+1)^{1-\frac{m}{2}}}\leq C_{m},
	\end{equation}
	applying It\^{o}'s formula and taking expectation on both sides, with $\mathbb{E}\tilde{N}(ds,dz)=0$ we derive,
	\begin{align*}
	\frac{d\mathbb{E}U(Y_{t}^{\varepsilon})}{dt}&=\mathbb{E} \frac{1}{\varepsilon}\langle  f(Y_{t}^{\varepsilon}),DU(Y_{t}^{\varepsilon})\rangle +\mathbb{E} \int_{|z|> \varepsilon^{\frac{1}{\alpha}}}\left( U(Y_{t-}^{\varepsilon}+\varepsilon^{-\frac{1}{\alpha}}\delta(Y_{t-}^\varepsilon) z)-U(Y_{t-}^{\varepsilon}) \right) \nu(dz) \\
		&\quad+\mathbb{E} \int_{|z|\leq \varepsilon^{\frac{1}{\alpha}}}\left( U(Y_{t-}^{\varepsilon}+\varepsilon^{-\frac{1}{\alpha}}\delta(Y_{t-}^\varepsilon) z)-U(Y_{t-}^{\varepsilon})-\langle  DU(Y_{t-}^{\varepsilon}), \varepsilon^{-\frac{1}{\alpha}}\delta(Y_{t-}^\varepsilon) z\rangle  \right) \nu(dz)\\
		&=I_{1}+I_{2}+I_{3},
	\end{align*}
for $I_{1}$, by  \eqref{2.2} in Assumption \ref{dis}, and \eqref{du}, there exists $ c_{m,\lambda}>0$ such that
\begin{align*}
I_{1}&=\mathbb{E} \frac{1}{\varepsilon}\langle  f(Y_{t}^{\varepsilon}),DU(Y_{t}^{\varepsilon})\rangle \\
& \leq\frac{1}{\varepsilon} \mathbb{E}  \frac{\langle  f(Y_{t}^{\varepsilon})-f(0),mY_{t}^{\varepsilon}\rangle  +\langle  f(0),mY_{t}^{\varepsilon}\rangle }{(|Y_{t}^{\varepsilon}|^{2}+1)^{1-\frac{m}{2}}}\\
&\leq  \frac{1}{\varepsilon}\mathbb{E} \frac{C_{m}|Y_{t}^{\varepsilon}|-m\lambda|Y_{t}^{\varepsilon}|^{2}}{(|Y_{t}^{\varepsilon}|^{2}+1)^{1-\frac{m}{2}}}
\leq\frac{C_{m}}{\varepsilon}
-\frac{m\lambda}{\varepsilon}
\mathbb{E}\frac{|Y_{t}^{\varepsilon}|^{2}}
{(|Y_{t}^{\varepsilon}|^{2}+1)^{1-\frac{m}{2}}}\leq \frac{C_{m}}{\varepsilon}-\frac{c_{m,\lambda}\mathbb{E}U(Y_{t}^{\varepsilon})}{\varepsilon},  
\end{align*}
	in addition, taking $y=\varepsilon^{-\frac{1}{\alpha}}z$,  
thus for $I_{2}$, by  the mean value theorem, Assumption \ref{gro-bou} of $ \delta(y)$ and \eqref{du}, there exists $ c_{m}>0$ such that
\begin{align*}
		&I_{2}=\frac{1}{\varepsilon}\mathbb{E} \int_{|y|> 1}\big( U(Y_{t-}^{\varepsilon}+\de(Y_{t-}^\varepsilon) y)-U(Y_{t-}^{\varepsilon}) \big) \nu(dy) \\
		&\quad\leq  \frac{C_{m}}{\varepsilon}\mathbb{E}\int_{|y|> 1}\left(|Y_{t}^{\varepsilon}|^{m-1}|y|+|y|^{m} \right) \nu(dy) \leq\frac{C_{m}}{\varepsilon}+\frac{c_{m}\mathbb{E}U(Y_{t}^{\varepsilon})}{\varepsilon},  
\end{align*}
then by Taylor's formula, and \eqref{d2u},
\begin{equation*}
I_{3}=\frac{1}{\varepsilon}\mathbb{E} \int_{|y|\leq 1}\big( U(Y_{t-}^{\varepsilon}+\de(Y_{t-}^\varepsilon) y)-U(Y_{t-}^{\varepsilon})-\langle  DU(Y_{t-}^{\varepsilon}),\de(Y_{t-}^\varepsilon) y\rangle  \big) \nu(dy)\leq  \frac{C_{m}}{\varepsilon},  
\end{equation*}
combining the above estimates, we obtain
\begin{equation*}
	\frac{d\mathbb{E}U(Y_{t}^{\varepsilon})}{dt}
	\leq\frac{C_{m}}{\varepsilon}
	-\frac{(c_{m,\lambda}-c_m)\mathbb{E}U(Y_{t}^{\varepsilon})}{\varepsilon},
\end{equation*}
choosing $\lambda$ in \eqref{2.2} sufficiently large such that
$c_{m,\lambda}-c_m>0$, and denote $c_{m,\lambda}-c_m$ by $c_m$, we derive
\begin{equation*}
	\frac{d\mathbb{E}U(Y_{t}^{\varepsilon})}{dt}
	\leq\frac{C_{m}}{\varepsilon}
	-\frac{c_{m}\mathbb{E}U(Y_{t}^{\varepsilon})}{\varepsilon}.
\end{equation*}
so that by Gronwall's inequality we have 
\begin{align*}
	\mathbb{E}U(Y_{t}^{\varepsilon})\leq e^{-c_{m}\frac{t}{\varepsilon}}(|y|^{2}+1)^{\frac{m}{2}}+\frac{C_{m}}{\varepsilon} \int^{t}_{0}e^{-\frac{c_{m}}{\varepsilon}(t-s)}ds,
\end{align*}
which means
$$\mathbb{E}(|Y_{t}^{\varepsilon}|^{2}+1)^{\frac{m}{2}}\leq e^{-c_{m}\frac{t}{\varepsilon}}(|y|^{2}+1)^{\frac{m}{2}}+C_{m}(1-e^{-c_{m}\frac{t}{\varepsilon}}),$$
so that there exists $C_{m}>0$ s.t.
\begin{equation*}
	\sup\limits_{\varepsilon\in (0,1)}\sup\limits_{t\geq0} \mathbb{E}\left(|Y_{t}^{\varepsilon}|^{m}\right)\leq C_{m}(1+|y|^{m}),
\end{equation*}
the proof is complete.
\end{proof}

\section*{Acknowledgements}
The author is very grateful to the invaluable guidance from Professor Xin Chen in Shanghai Jiao Tong University as the doctoral supervisor, particularly for  the crucial references and useful communications. Special acknowledgement is extended to Dr. Qiu-Chen Yang for helpful discussions.

\end{document}